\providecommand{\U}[1]{\protect\rule{.1in}{.1in}}
\newtheorem{theorem}{Theorem}[section]
\newtheorem{definition}[theorem]{Definition}
\newtheorem{lemma}[theorem]{Lemma}
\newtheorem{remark}[theorem]{Remark}
\newenvironment{proof}[1][Proof]{\noindent\textbf{#1.} }{\  $\Box$}
\numberwithin{equation}{section}
\begin{document}

\title{$G$-capacity under degenerate case and its application}
\author{Xiaojuan Li\thanks{Zhongtai Securities Institute for Financial Studies,
Shandong University, Jinan 250100, China. Email: lixiaojuan@mail.sdu.edu.cn. }
\and Xinpeng Li\thanks{Research Center for Mathematics and Interdisciplinary Sciences, Shandong University,
Qingdao 266237, China, lixinpeng@sdu.edu.cn. } }
\maketitle

\textbf{Abstract}. In this paper, we first find a type of viscosity solution
of $G$-heat equation under degenerate case, and then obtain the related
$G$-capacity $c(\{B_{T}\in A\})$ for any Borel set $A$. Furthermore, we prove
that $I_{A}(B_{T})$ has no quasi-continuous version when it is not a constant function.

{\textbf{Key words}. } $G$-heat equation, $G$-expectation, $G$-capacity,
Quasi-continuous, Viscosity solution

\textbf{AMS subject classifications.} 60H10

\addcontentsline{toc}{section}{\hspace*{1.8em}Abstract}

\section{Introduction}

Motivated by model uncertainty in finance, Peng \cite{P07a, P2019} introduced
the notions of $G$-expectation $\mathbb{\hat{E}}\left[  \cdot\right]  $ and
$G$-Brownian motion $(B_{t})_{t\geq0}$ via the following $G$-heat equation:%

\begin{equation}
\partial_{t}u-G\left(  \partial_{xx}^{2}u\right)  =0\text{, }u\left(
0,x\right)  =\varphi\left(  x\right)  , \label{1}%
\end{equation}
where $G\left(  a\right)  =\frac{1}{2}\left(  \overline{\sigma}^{2}%
a^{+}-\underline{\sigma}^{2}a^{-}\right)  $ for $a\in\mathbb{R}$,
$\overline{\sigma}>0$ and $\underline{\sigma}\in\lbrack0,\overline{\sigma}]$.
For any bounded and continuous function $\varphi$, we have $\mathbb{\hat{E}%
}\left[  \varphi\left(  x+B_{t}\right)  \right]  =u(t,x)$, where $u$ is the
viscosity solution of (\ref{1}). Under the $G$-expectation framework, the
corresponding stochastic calculus of It\^{o}'s type was also established in
Peng \cite{P07a,P08a}.

The $G$-expectation can be also seen as a upper expectation. Indeed, Denis et
al. \cite{DHP11} obtained a representation theorem of $G$-expectation
$\mathbb{\hat{E}}\left[  \cdot\right]  $ by stochastic control method:%
\[
\mathbb{\hat{E}}[X]=\sup_{P\in\mathcal{P}}E_{P}[X]\text{ for each }X\in
Lip(\Omega).
\]
where $\mathcal{P}$ is a family of weakly compact probability measures on
$(\Omega,\mathcal{B}(\Omega))$. Moreover, they gave the\ characterization of
$L_{G}^{p}(\Omega)$ for $p\geq1$. The reprensentation theorem was also
obtained in \cite{HP09} by a simple probabilistic method.

Denis et al. \cite{DHP11} (see also \cite{HP09}) introduced the notion of
$G$-capacity $c(\cdot)$ in $G$-expectation space and showed that each random
variable in $L_{G}^{1}(\Omega)$ has a quasi-continuous version with respect to
$c(\cdot)$. Under the non-degenerate case, i.e. $\underline{\sigma}>0$, Hu et
al. \cite{HWZ} proved that $c(\{B_{T}=a\})=0$ for each $(T,a)\in\left(
0,\infty\right)  \times\mathbb{R}$ by finding a kind of viscosity
supersolution of $G$-heat equation (\ref{1}), and further obtained that
$I_{[a,b]}(B_{T})$, $a\leq b$, is in $L_{G}^{1}(\Omega)$, which has important
application in stochastic recursive optimal control problem under
$G$-expectation space (see \cite{HJ1}). As far as we know, there is no result
about the above two points under the degenerate case, i.e. $\underline{\sigma
}=0$.

In this paper, we first study $c(\{B_{T}\in A\})$ for $A\in\mathcal{B}%
(\mathbb{R})$ under degenerate case. The known method for calculating
$G$-capacity is to find the \textquotedblleft similarity solution" of $G$-heat
equation (\ref{1}) (see \cite{HS, PWX, PYY}). But this method is no longer
suitable for some cases such as $A=(-\infty,a]\cup\lbrack b,\infty)$ with
$a<b$. To overcome this difficult, we use stochastic control method to find a
type of viscosity solution of $G$-heat equation (\ref{1}), and then obtain
$c(\{B_{T}\in A\})$, which can provide some examples for checking the
convergence rate of Peng's central limit theorem (see \cite{HL, K2}) under
$\underline{\sigma}=0$. Furthermore, we prove that $I_{A}(B_{T})$ is not in
$L_{G}^{1}(\Omega)$ for any $A\in\mathcal{B}(\mathbb{R})$ with $A\not =%
\emptyset$ and $A\not =\mathbb{R}$, which is completely different from the
case $\underline{\sigma}>0$.

This paper is organized as follows. In Section 2, we present some basic
notions and results of $G$-expectation. In Section 3, we obtain the
$G$-capacity $c(\{B_{T}\in A\})$ for any Borel set $A$ under degenerate case.
As an application, we prove that $I_{A}(B_{T})$ is not in $L_{G}^{1}(\Omega)$
for any $A\in\mathcal{B}(\mathbb{R})$ with $A\not =\emptyset$ and
$A\not =\mathbb{R}$ in Section 4.

\section{Preliminaries}

We recall some basic notions and results of $G$-expectation. The readers may
refer to \cite{P07a, P08a, P08b, P2019} for more details.

Let $\Omega=C\left[  0,\infty\right)  $ be the space of real-valued continuous
functions on $[0,\infty)$ with $\omega_{0}=0$. Let $B_{t}(\omega):=\omega_{t}%
$, for $\omega\in\Omega$ and $t\geqslant0$ be the canonical process . Set%
\[
Lip\left(  \Omega\right)  :=\left\{  \varphi\left(  B_{t_{1}},B_{t_{2}}%
,\cdots,B_{t_{n}}\right)  :n\in\mathbb{N},\text{ }0<t_{1}<\cdots<t_{n},\text{
}\varphi\in C_{b,Lip}\left(  \mathbb{R}^{n}\right)  \right\}  ,
\]
where $C_{b,Lip}\left(  \mathbb{R}^{n}\right)  $ denotes the space of bounded
Lipschitz functions on $\mathbb{R}^{n}$. It is easy to verify that%
\[
Lip\left(  \Omega\right)  =\left\{  \varphi\left(  B_{t_{1}},B_{t_{2}%
}-B_{t_{1}},\cdots,B_{t_{n}}-B_{t_{n-1}}\right)  :n\in\mathbb{N},\text{
}0<t_{1}<\cdots<t_{n},\text{ }\varphi\in C_{b,Lip}\left(  \mathbb{R}%
^{n}\right)  \right\}  .
\]

Let%
\[
G\left(  a\right)  :=\frac{1}{2}\left(  \overline{\sigma}^{2}a^{+}%
-\underline{\sigma}^{2}a^{-}\right)  \text{ for }a\in\mathbb{R},
\]
where $\overline{\sigma}>0$ and $\underline{\sigma}\in\lbrack0,\overline
{\sigma}]$. The $G$-expectation $\mathbb{\hat{E}}:$ $Lip\left(  \Omega\right)
\rightarrow\mathbb{R}$ is defined by the following two steps.

Step 1. For each $X=\varphi\left(  B_{t}-B_{s}\right)  $ with $0\leq s\leq t$
and $\varphi\in C_{b,Lip}\left(  \mathbb{R}\right)  $, we define%
\[
\mathbb{\hat{E}}\left[  X\right]  =u(t-s,0),
\]
where $u$ is the viscosity solution of (\ref{1}).

Step 2. For each $X=\varphi\left(  B_{t_{1}},B_{t_{2}}-B_{t_{1}}%
,\cdots,B_{t_{n}}-B_{t_{n-1}}\right)  $ with $0<t_{1}<\cdots<t_{n}$ and
$\varphi\in C_{b,Lip}\left(  \mathbb{R}^{n}\right)  $, we define%
\[
\mathbb{\hat{E}}\left[  X\right]  =\varphi_{0},
\]
where $\varphi_{0}$ is obtained via the following procedure:%
\[%
\begin{array}
[c]{rcl}%
\varphi_{n-1}(x_{1},\cdots,x_{n-1}) & = & \mathbb{\hat{E}}\left[
\varphi\left(  x_{1},\cdots,x_{n-1},B_{t_{n}}-B_{t_{n-1}}\right)  \right]  ,\\
\varphi_{n-2}(x_{1},\cdots,x_{n-2}) & = & \mathbb{\hat{E}}\left[
\varphi_{n-1}\left(  x_{1},\cdots,x_{n-2},B_{t_{n-1}}-B_{t_{n-2}}\right)
\right]  ,\\
& \vdots & \\
\varphi_{1}(x_{1}) & = & \mathbb{\hat{E}}\left[  \varphi_{2}\left(
x_{1},B_{t_{2}}-B_{t_{1}}\right)  \right]  ,\\
\varphi_{0} & = & \mathbb{\hat{E}}\left[  \varphi_{1}\left(  B_{t_{1}}\right)
\right]  .
\end{array}
\]
The following is the definition of the viscosity solution of (\ref{1}) (see
\cite{CIP}).

\begin{definition}
\label{d1}A real-valued continuous function $u\in C\left(  [0,\infty
)\times\mathbb{R}\right)  $ is called a viscosity subsolution (resp.
supersolution) of (\ref{1}) on $[0,\infty)\times\mathbb{R}$ if $u(0,\cdot
)\leq\varphi\left(  \cdot\right)  $ (resp. $u(0,\cdot)\geq\varphi\left(
\cdot\right)  $), and for all $\left(  t^{\ast},x^{\ast}\right)  \in\left(
0,\infty\right)  \times\mathbb{R}$, $\phi\in C^{2}\left(  \left(
0,\infty\right)  \times\mathbb{R}\right)  $ such that $u\left(  t^{\ast
},x^{\ast}\right)  =\phi\left(  t^{\ast},x^{\ast}\right)  $ and $u<\phi$
(resp. $u>\phi$) on $\left(  0,\infty\right)  \times\mathbb{R}\backslash
\left(  t^{\ast},x^{\ast}\right)  $, we have
\[
\partial_{t}\phi\left(  t^{\ast},x^{\ast}\right)  -G\left(  \partial_{xx}%
^{2}\phi\left(  t^{\ast},x^{\ast}\right)  \right)  \leq0\text{ (resp. }%
\geq0\text{)}.
\]

A real-valued continuous function $u\in C\left(  [0,\infty)\times
\mathbb{R}\right)  $ is called a viscosity solution of (\ref{1}) if it is both
a viscosity subsolution and a viscosity supersolution of (\ref{1}) on
$[0,\infty)\times\mathbb{R}$.
\end{definition}

The space $(\Omega,Lip\left(  \Omega\right)  ,\mathbb{\hat{E}})$ is called a
$G$-expectation space. The corresponding canonical process $(B_{t})_{t\geq0}$
is called a $G$-Brownian motion. The $G$-expectation $\mathbb{\hat{E}}:$
$Lip\left(  \Omega\right)  \rightarrow\mathbb{R}$ satisfies the following
properties: for each $X$, $Y\in$ $Lip\left(  \Omega\right)  $,

\begin{description}
\item[(i)] Monotonicity: If $X\geq Y$, then $\mathbb{\hat{E}}\left[  X\right]
\geq\mathbb{\hat{E}}\left[  Y\right]  .$

\item[(ii)] Constant preservation: $\mathbb{\hat{E}}\left[  c\right]  =c$ for
$c\in\mathbb{R}$.

\item[(iii)] Subadditivity: $\mathbb{\hat{E}}\left[  X+Y\right]
\leq\mathbb{\hat{E}}\left[  X\right]  +\mathbb{\hat{E}}\left[  Y\right]  $.

\item[(iv)] Positive homogeneity: $\mathbb{\hat{E}}\left[  \lambda X\right]
=\lambda\mathbb{\hat{E}}\left[  X\right]  $ for $\lambda\geq0$.
\end{description}

For every $p\geq1$, we denote by $L_{G}^{P}\left(  \Omega\right)  $ the
completion of $Lip\left(  \Omega\right)  $ under the norm $\left\Vert
X\right\Vert _{p}:=\left(  \mathbb{\hat{E}}\left[  \left\vert X\right\vert
^{p}\right]  \right)  ^{1/p}$. The $G$-expectation $\mathbb{\hat{E}}\left[
X\right]  $ can be extended continuously to $L_{G}^{1}\left(  \Omega\right)  $
under the norm $||\cdot||_{1}$, and $\mathbb{\hat{E}}:$ $L_{G}^{1}\left(
\Omega\right)  \rightarrow\mathbb{R}$ still satisfies (i)-(iv).

Denis et al. \cite{DHP11} (see also \cite{HP09}) proved the following
reprensentation theorem.

\begin{theorem}
There exists a weakly compact set of probability measures $\mathcal{P}$ on
$(\Omega,\mathcal{B}(\Omega))$ such that%
\[
\mathbb{\hat{E}}[X]=\sup_{P\in\mathcal{P}}E_{P}[X]\text{ for each }X\in
L_{G}^{1}(\Omega),
\]
where $\mathcal{B}(\Omega)=\sigma(B_{t}:t\geq0)$. $\mathcal{P}$ is called a
set that represents $\mathbb{\hat{E}}$.
\end{theorem}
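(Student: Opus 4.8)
The plan is to exhibit a concrete weakly compact family $\mathcal{P}$ of martingale laws, verify the stated identity first on cylinder random variables through a stochastic-control reading of the $G$-heat equation, prove weak compactness by a uniform moment estimate, and then propagate the identity to the completion $L_{G}^{1}(\Omega)$ using that compactness. \textbf{Construction.} On an auxiliary space carrying a Brownian motion $W$ with augmented filtration $\mathbb{F}=(\mathcal{F}_{t})_{t\geq0}$, let $\mathcal{A}$ be the set of $\mathbb{F}$-progressively measurable processes $\theta$ with $\theta_{t}\in\lbrack\underline{\sigma},\overline{\sigma}]$ for all $t$; for $\theta\in\mathcal{A}$ put $M_{t}^{\theta}:=\int_{0}^{t}\theta_{s}\,dW_{s}$ and let $P^{\theta}$ be the law of $M_{\cdot}^{\theta}$ on $(\Omega,\mathcal{B}(\Omega))$. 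Set $\mathcal{P}_{0}:=\{P^{\theta}:\theta\in\mathcal{A}\}$ and let $\mathcal{P}$ be the closure of $\mathcal{P}_{0}$ for weak convergence on $\Omega=C[0,\infty)$ equipped with the local-uniform metric.

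\textbf{Identity on $Lip(\Omega)$.} For a one-step increment $X=\varphi(B_{t}-B_{s})$ with $\varphi\in C_{b,Lip}(\mathbb{R})$, consider $v(r,x):=\sup_{\theta\in\mathcal{A}}E\big[\varphi\big(x+\int_{0}^{r}\theta_{u}\,dW_{u}\big)\big]$. By the dynamic programming principle and a routine verification argument, $v$ is a bounded, uniformly continuous viscosity solution of $\partial_{r}v=\sup_{\underline{\sigma}\leq\gamma\leq\overline{\sigma}}\tfrac{1}{2}\gamma^{2}\partial_{xx}^{2}v=G(\partial_{xx}^{2}v)$ with $v(0,\cdot)=\varphi$; by uniqueness for the $G$-heat equation $v=u$, so $\mathbb{\hat{E}}[X]=u(t-s,0)=v(t-s,0)=\sup_{P\in\mathcal{P}_{0}}E_{P}[X]$, and since $X$ is a bounded continuous functional of $\omega$ the supremum is unchanged under weak closure, giving $\mathbb{\hat{E}}[X]=\sup_{P\in\mathcal{P}}E_{P}[X]$. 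For a general cylinder random variable $X=\varphi(B_{t_{1}},B_{t_{2}}-B_{t_{1}},\dots,B_{t_{n}}-B_{t_{n-1}})$ one iterates: the nested construction of $\mathbb{\hat{E}}$ is reproduced block by block by conditioning $P^{\theta}$ on the earlier increments, and the iterated suprema collapse to a single $\sup_{\theta\in\mathcal{A}}$ after a measurable-selection (concatenation of controls) argument. Thus $\mathbb{\hat{E}}[X]=\sup_{P\in\mathcal{P}}E_{P}[X]$ for all $X\in Lip(\Omega)$.

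\textbf{Weak compactness and passage to $L_{G}^{1}(\Omega)$.} Since $|\theta_{t}|\leq\overline{\sigma}$, the Burkholder--Davis--Gundy inequality gives $\sup_{P\in\mathcal{P}}E_{P}[|B_{t}-B_{s}|^{2m}]\leq C_{m}|t-s|^{m}$ for every integer $m\geq1$; this uniform Kolmogorov--Chentsov bound yields tightness of $\mathcal{P}_{0}$ on $C[0,\infty)$, and as $\mathcal{P}$ is its weak closure, $\mathcal{P}$ is weakly compact. Now write $\tilde{\mathbb{E}}[X]:=\sup_{P\in\mathcal{P}}E_{P}[X]$. For $X,Y\in Lip(\Omega)$ one has $|\tilde{\mathbb{E}}[X]-\tilde{\mathbb{E}}[Y]|\leq\tilde{\mathbb{E}}[|X-Y|]=\mathbb{\hat{E}}[|X-Y|]=\Vert X-Y\Vert_{1}$, so $\tilde{\mathbb{E}}$ extends continuously to the $\Vert\cdot\Vert_{1}$-completion and the extension agrees with $\mathbb{\hat{E}}$. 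The real content is to make this meaningful — to show that each $X\in L_{G}^{1}(\Omega)$ has a $\mathcal{B}(\Omega)$-measurable version that is simultaneously $P$-integrable for every $P\in\mathcal{P}$ with $\mathbb{\hat{E}}[X]=\sup_{P\in\mathcal{P}}E_{P}[X]$ computed by the genuine integrals. The crucial step is that $X_{n}\in Lip(\Omega)$ with $X_{n}\downarrow0$ pointwise forces $\mathbb{\hat{E}}[X_{n}]\downarrow0$; here weak compactness of $\mathcal{P}$ is indispensable — approximate each $X_{n}$ from above by bounded continuous functionals on $\Omega$, use tightness to confine the relevant mass to a compact $K\subset\Omega$ on which Dini's theorem forces uniform convergence, then use closedness of $\mathcal{P}$ to pass to the limit in $P$. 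This regularity identifies $L_{G}^{1}(\Omega)$ with the space of quasi-continuous $X$ satisfying $\mathbb{\hat{E}}[|X|I_{\{|X|>N\}}]\to0$ and lifts the cylinder-function identity to all of $L_{G}^{1}(\Omega)$. I expect this interchange of $\sup_{P\in\mathcal{P}}$ with the monotone limit along $X_{n}$ to be the crux, precisely because $\mathcal{P}$ is only weakly (not norm) compact and $P\mapsto E_{P}[X_{n}]$ need not be continuous; a more elementary, purely probabilistic variant avoiding viscosity-solution theory is available along the lines of \cite{HP09}, with $\mathcal{P}$ built from finitely many pasted volatilities together with the same tightness estimate.
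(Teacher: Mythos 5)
The paper does not prove this theorem at all: it is imported verbatim from Denis--Hu--Peng \cite{DHP11} and Hu--Peng \cite{HP09}, and the only thing the paper records about its proof is the concrete family $\mathcal{P}=\overline{\mathcal{P}}_{1}$ of laws of $\int_{0}^{\cdot}v_{s}\,dW_{s}$ in Remark \ref{new-re-1} --- which is exactly the $\mathcal{P}$ you construct. Your outline reproduces the standard argument of those references (stochastic-control identification of $\mathbb{\hat{E}}$ on $Lip(\Omega)$ via the HJB/$G$-heat equation, tightness from uniform moment bounds, and the monotone-regularity step needed to pass to the completion), and it correctly flags the genuinely delicate point --- justifying $\mathbb{\hat{E}}[X_{n}]\downarrow 0$ for $X_{n}\downarrow 0$ and the identification of $L_{G}^{1}(\Omega)$ inside $\mathbb{L}^{1}(\Omega)$ --- though, like the rest of the sketch, that step is asserted rather than carried out; as a blind reconstruction of the cited proof it is sound in architecture and consistent with what the paper relies on.
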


\begin{remark}
\label{new-re-1}Denis et al. \cite{DHP11} gave a concrete $\mathcal{P}$ that
represents $\mathbb{\hat{E}}$ as follows. Let $(W_{t})_{t\geq0}$ be a
$1$-dimensional classical Brownian motion defined on a Wiener probability
space $(\tilde{\Omega},\mathcal{\tilde{F}},P^{W})$, and let $(\mathcal{\tilde
{F}}_{t})_{t\geq0}$ be the natural filtration generated by $W$. The set of
probability measures $\mathcal{P}_{1}$ on $(\Omega,\mathcal{B}(\Omega))$ is
defined by%
\[
\mathcal{P}_{1}=\left\{  P=P^{W}\circ(X^{v})^{-1}:X_{t}^{v}=\int_{0}^{t}%
v_{s}dW_{s},\text{ }(v_{s})_{s\leq T}\in M^{2}(0,T;[\underline{\sigma
},\overline{\sigma}])\text{ for any }T>0\right\}  ,
\]
where $M^{2}(0,T;[\underline{\sigma},\overline{\sigma}])$ is the space of all
$\mathcal{\tilde{F}}_{t}$-adapted processes $(v_{s})_{s\leq T}$ with $v_{s}%
\in\lbrack\underline{\sigma},\overline{\sigma}]$. Then $\mathcal{P}%
=\mathcal{\bar{P}}_{1}$ represents $\mathbb{\hat{E}}$, where $\mathcal{\bar
{P}}_{1}$ is the closure of $\mathcal{P}_{1}$ under the topology of weak convergence.
\end{remark}

The $G$-capacity associated to $\mathcal{P}$\ is defined as%
\begin{equation}
c(D)=\sup_{P\in\mathcal{P}}P(D)\text{ for }D\in\mathcal{B}(\Omega). \label{e2}%
\end{equation}
An important property of this capacity is that $c\left(  F_{n}\right)
\downarrow c\left(  F\right)  $ for any closed sets $F_{n}\downarrow F$.

A set $A\subset\mathcal{B}(\Omega)$ is polar if $c\left(  A\right)  =0$. A
property holds "quasi-surely" (q.s.) if it holds outside a polar set. In the
following, we do not distinguish two random variables $X$ and $Y$ if $X=Y$
q.s. For this $\mathcal{P}$, set%
\[
\mathbb{L}^{p}(\Omega):=\left\{  X\in\mathcal{B}(\Omega):\sup_{P\in
\mathcal{P}}E_{P}[|X|^{p}]<\infty\right\}  \text{ for }p\geq1\text{.}%
\]
It is easy to check that $L_{G}^{P}\left(  \Omega\right)  \subset$
$\mathbb{L}^{p}(\Omega)$. For each $X\in\mathbb{L}^{1}(\Omega)$,%
\[
\mathbb{\hat{E}}[X]:=\sup_{P\in\mathcal{P}}E_{P}[X]
\]
is still called $G$-expectation and satisfies (i)-(iv).

Now we review the\ characterization of $L_{G}^{p}(\Omega)$ for $p\geq1$.

\begin{definition}
A function $X:\Omega\rightarrow\mathbb{R}$ is said to be quasi-continuous if
for each $\varepsilon>0$, there exists an open set $O\subset\Omega$ with
$c(O)<\varepsilon$ such that $X|_{O^{c}}$ is continuous.
\end{definition}

\begin{definition}
We say that $X:\Omega\rightarrow\mathbb{R}$ has a quasi-continuous version if
there exists a quasi-continuous function $Y:\Omega\rightarrow\mathbb{R}$ such
that $X=Y$, q.s.
\end{definition}

\begin{theorem}
(\cite{DHP11, HP09})For each $p\geq1$, we have%
\[
L_{G}^{P}\left(  \Omega\right)  =\left\{  X\in\mathcal{B}(\Omega
):\lim_{N\rightarrow\infty}\mathbb{\hat{E}}\left[  |X|^{p}I_{\{|X|\geq
N\}}\right]  =0\text{ and }X\text{ has a quasi-continuous version}\right\}  .
\]

\end{theorem}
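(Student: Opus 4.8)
The plan is to prove the two inclusions separately, in both directions reducing to bounded random variables via the truncations $X_{N}:=(X\wedge N)\vee(-N)$, and to use throughout two soft facts: $c$ is countably subadditive (being a supremum of probability measures), and it obeys the Chebyshev-type bound $c(\{|Y|>\lambda\})\le\lambda^{-p}\mathbb{\hat{E}}[|Y|^{p}]$ for $Y\in\mathbb{L}^{p}(\Omega)$ and $\lambda>0$. For ``$\subseteq$'', let $X\in L_{G}^{p}(\Omega)$ and choose $X_{n}\in Lip(\Omega)$ with $\|X-X_{n}\|_{p}\to0$; each $X_{n}$ is a bounded continuous cylinder function. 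To obtain $\lim_{N\to\infty}\mathbb{\hat{E}}[|X|^{p}I_{\{|X|\ge N\}}]=0$ I would fix $\varepsilon>0$, pick $n$ with $\|X-X_{n}\|_{p}^{p}<\varepsilon$, observe that $\{|X|\ge N\}\subseteq\{|X-X_{n}|\ge N-\|X_{n}\|_{\infty}\}=:A_{N}$ for $N>\|X_{n}\|_{\infty}$, and estimate $\mathbb{\hat{E}}[|X|^{p}I_{A_{N}}]$ by writing $X=X_{n}+(X-X_{n})$: the $X_{n}$-part contributes at most a constant times $c(A_{N})$, and $c(A_{N})\le(N-\|X_{n}\|_{\infty})^{-p}\varepsilon\to0$, so the whole quantity is eventually bounded by a constant times $\varepsilon$. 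For the quasi-continuous version I would pass to a subsequence so fast that $\mathbb{\hat{E}}[|X_{n_{k+1}}-X_{n_{k}}|^{p}]\le2^{-k(p+1)}$; then $c(\{|X_{n_{k+1}}-X_{n_{k}}|>2^{-k}\})\le2^{-k}$, the open sets $O_{m}:=\bigcup_{k\ge m}\{|X_{n_{k+1}}-X_{n_{k}}|>2^{-k}\}$ have $c(O_{m})\le2^{1-m}$, and on each $O_{m}^{c}$ the continuous functions $(X_{n_{k}})_{k\ge m}$ are uniformly Cauchy, hence converge uniformly to a function that is continuous on $O_{m}^{c}$. Patching these limits over $m$ gives a quasi-continuous $Y$ defined off the polar set $\bigcap_{m}O_{m}$, and since $X_{n_{k}}\to X$ in $L_{G}^{p}$ (hence in capacity), $Y=X$ q.s.

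For ``$\supseteq$'', take $X$ in the right-hand side, identified with its quasi-continuous version. Since $|X-X_{N}|^{p}\le|X|^{p}I_{\{|X|>N\}}$ we have $\|X-X_{N}\|_{p}\to0$, so by completeness of $L_{G}^{p}(\Omega)$ it suffices to show that every bounded quasi-continuous $Y$ lies in $L_{G}^{p}(\Omega)$. Fix such a $Y$ with $|Y|\le M$ and $\varepsilon>0$: choose an open $O$ with $c(O)<\varepsilon$ and $Y|_{O^{c}}$ continuous, then use Tietze's extension theorem to get $\widetilde{Y}\in C_{b}(\Omega)$ with $\|\widetilde{Y}\|_{\infty}\le M$ and $\widetilde{Y}=Y$ on $O^{c}$, so that $\mathbb{\hat{E}}[|Y-\widetilde{Y}|^{p}]\le(2M)^{p}c(O)<(2M)^{p}\varepsilon$. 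Everything thus reduces to approximating a bounded continuous functional $\widetilde{Y}$ on $\Omega=C[0,\infty)$ in $\|\cdot\|_{p}$ by elements of $Lip(\Omega)$. Here I would use the weak compactness of $\mathcal{P}$ (Remark~\ref{new-re-1}): given $\delta>0$ there is a compact $K\subset\Omega$ with $c(K^{c})<\delta$, and $K$ is equicontinuous on each $[0,T]$, so the polygonal interpolations $\pi_{n}\omega$ of $\omega$ at the dyadic times $\{k2^{-n}:0\le k\le n2^{n}\}$ converge to $\omega$ uniformly over $\omega\in K$; hence $\widetilde{Y}^{(n)}:=\widetilde{Y}\circ\pi_{n}\to\widetilde{Y}$ uniformly on $K$, and $\mathbb{\hat{E}}[|\widetilde{Y}-\widetilde{Y}^{(n)}|^{p}]\le\sup_{K}|\widetilde{Y}-\widetilde{Y}^{(n)}|^{p}+(2M)^{p}c(K^{c})$, which is small for $n$ large. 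Finally each $\widetilde{Y}^{(n)}$ is a bounded continuous function of finitely many increments of $\omega$, and on the (again high-capacity, by tightness) event that these increments lie in a large Euclidean box it is uniformly approximable by a $C_{b,Lip}$-function, producing the desired $Lip(\Omega)$-approximant.

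The ``$\subseteq$'' direction is essentially soft, using only Chebyshev's inequality for $c$ and countable subadditivity. The main obstacle is the final step of ``$\supseteq$'': passing from a generic bounded continuous functional on $C[0,\infty)$ to cylinder functions of \emph{Lipschitz} type with control in the \emph{sublinear} norm $\|\cdot\|_{p}$ rather than merely in capacity. This is precisely where the weak compactness of $\mathcal{P}$ from Remark~\ref{new-re-1} — equivalently, the uniform moment/tightness estimates available for $(B_{t})$ in the $G$-framework — is indispensable: without a quantitative tightness input one gets only quasi-sure convergence of the approximants, which is too weak for membership in $L_{G}^{p}(\Omega)$.
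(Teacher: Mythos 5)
The paper states this theorem as a quoted result from \cite{DHP11, HP09} and supplies no proof of its own, so there is nothing internal to compare against. Your argument is, in substance, the standard proof from those references — truncation $X_{N}$ plus the Chebyshev and Borel--Cantelli capacity arguments for one inclusion, and Tietze extension plus tightness of $\mathcal{P}$ and polygonal/Lipschitz approximation of continuous functionals for the other — and it is correct, with only routine details left implicit (Borel measurability of the quasi-continuous version, the compactness argument showing $\tilde{Y}\circ\pi_{n}\to\tilde{Y}$ uniformly on $K$) that are handled in exactly this way in the cited literature.
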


The following Fatou's property is important in $G$-expectation space.

\begin{theorem}
\label{new-31}(\cite{DHP11, HP09}) Let $\left\{  X_{n}\right\}  _{n=1}%
^{\infty}\subset L_{G}^{1}(\Omega)$ satisfy $X_{n}\downarrow X$ q.s. Then
\begin{equation}
\mathbb{\hat{E}}\left[  X_{n}\right]  \downarrow\mathbb{\hat{E}}\left[
X\right]  . \label{newnew4}%
\end{equation}
Moreover, if $X\in L_{G}^{1}(\Omega)$, then%
\[
\mathbb{\hat{E}}\left[  X_{n}-X\right]  \downarrow0.
\]

\end{theorem}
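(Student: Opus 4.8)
Since this is exactly the theorem of \cite{DHP11, HP09}, I would reprove it along the following lines, in three steps: a Dini-type lemma on $C_b(\Omega)$, the easy half of the limit via the representation $\hat{\mathbb{E}}=\sup_{P\in\mathcal{P}}E_P$, and a bootstrap to $L_G^1(\Omega)$ by $\|\cdot\|_1$-approximation. The crucial external input is the regularity of the capacity stated above, $c(F_n)\downarrow c(F)$ for closed $F_n\downarrow F$, which is equivalent to the weak --- hence, since $\mathcal{P}$ is a weakly compact set of laws on the Polish space $\Omega$, sequential --- compactness of $\mathcal{P}$.

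First I would prove the base case: if $\{\xi_n\}\subset C_b(\Omega)$ and $\xi_n\downarrow 0$ pointwise, then $\hat{\mathbb{E}}[\xi_n]=\sup_{P\in\mathcal{P}}E_P[\xi_n]\downarrow 0$. The sequence $\hat{\mathbb{E}}[\xi_n]$ is nonincreasing and nonnegative; if its limit were $\delta>0$, choose $P_n\in\mathcal{P}$ with $E_{P_n}[\xi_n]\geq\delta/2$, pass by weak compactness to a subsequence $P_{n_k}\rightharpoonup P^{\ast}\in\mathcal{P}$, and note that for each fixed $m$ and all $n_k\geq m$ one has $E_{P_{n_k}}[\xi_m]\geq E_{P_{n_k}}[\xi_{n_k}]\geq\delta/2$; since $\xi_m\in C_b(\Omega)$, letting $k\to\infty$ gives $E_{P^{\ast}}[\xi_m]\geq\delta/2$ for every $m$, contradicting $E_{P^{\ast}}[\xi_m]\downarrow 0$ (ordinary monotone convergence under the fixed $P^{\ast}$). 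Here one uses $Lip(\Omega)\subset C_b(\Omega)$, valid because each $B_t:\omega\mapsto\omega_t$ is continuous on $\Omega$ and $\varphi\in C_{b,Lip}$.

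Next, the two inequalities for $L_G^1(\Omega)$. For the lower bound: for each $P\in\mathcal{P}$ every polar set is $P$-null, so $X_n\downarrow X$ $P$-a.s., and since $X_1\in L_G^1(\Omega)\subset\mathbb{L}^1(\Omega)\subset L^1(P)$ dominates from above, ordinary monotone convergence gives $E_P[X_n]\downarrow E_P[X]\in[-\infty,\infty)$; taking the supremum, $\lim_n\hat{\mathbb{E}}[X_n]\geq\hat{\mathbb{E}}[X]$. For the upper bound, fix $\varepsilon>0$; using $X_n=\min(X_1,\dots,X_n)$ and the density of $Lip(\Omega)$ in $L_G^1(\Omega)$, choose $\xi_k\in Lip(\Omega)$ with $\hat{\mathbb{E}}[|X_k-\xi_k|]\leq\varepsilon 2^{-k}$ and set $\eta_n:=\min(\xi_1,\dots,\xi_n)\in C_b(\Omega)$, which decreases to a limit $\eta$; from $|\min_k a_k-\min_k b_k|\leq\sum_k|a_k-b_k|$ and subadditivity one gets $\hat{\mathbb{E}}[|X_n-\eta_n|]\leq\varepsilon$ for all $n$, hence $\hat{\mathbb{E}}[X_n]\leq\hat{\mathbb{E}}[\eta_n]+\varepsilon$. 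Applying the base case to $\eta_n\vee(-M)\downarrow\eta\vee(-M)$ and comparing the tails of the $X_n$ (again via the lower-bound argument applied after truncation), then letting $M\to\infty$ and $\varepsilon\to 0$, yields $\lim_n\hat{\mathbb{E}}[X_n]\leq\hat{\mathbb{E}}[X]$. This gives the first assertion; for the second, if $X\in L_G^1(\Omega)$ then $X_n-X\in L_G^1(\Omega)$ and $X_n-X\downarrow 0$ q.s., so the first assertion gives $\hat{\mathbb{E}}[X_n-X]\downarrow\hat{\mathbb{E}}[0]=0$.

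I expect the main obstacle to be the last part of the upper bound. The approximation by cylinder functions is only in $\|\cdot\|_1$ --- an ``in expectation'' statement, not a pointwise one --- so it must be reconciled with the pointwise monotone structure needed by the Dini/tightness argument, and the limit $\eta$ (like $X$ itself) need not be bounded below nor belong to $L_G^1(\Omega)$. The truncations $(\cdot)\vee(-M)$ restore boundedness, but then passing from $\hat{\mathbb{E}}[\eta\vee(-M)]$ back to $\hat{\mathbb{E}}[\eta]$ and relating it to $\hat{\mathbb{E}}[X]$ is a genuine exchange-of-$\sup$-and-$\inf$ point; following \cite{DHP11} I would handle it by working throughout with the functional $\sup_{P\in\mathcal{P}}E_P[\cdot]$ directly on $\mathbb{L}^1(\Omega)$ (where it is well defined with values in $[-\infty,\infty)$) and invoking a Portmanteau-type comparison between weak convergence of measures in $\mathcal{P}$ and the capacity $c$.
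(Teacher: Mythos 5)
The paper itself offers no proof of Theorem \ref{new-31}: it is quoted from \cite{DHP11, HP09} as a preliminary, so there is nothing internal to compare your argument against. Judged on its own, your reconstruction is essentially the standard argument and all its main components are sound: the Dini-type lemma via sequential weak compactness of $\mathcal{P}$ (legitimate, since $\Omega$ is Polish), the lower bound by classical monotone convergence under each fixed $P$ (dominated from above by $X_{1}\in L^{1}(P)$ because polar sets are $P$-null), and the reduction of the upper bound to bounded continuous functions via the $\varepsilon 2^{-k}$ approximation applied to $X_{n}=\min(X_{1},\dots,X_{n})$, giving $\mathbb{\hat{E}}[|X_{n}-\eta_{n}|]\leq\varepsilon$ with $\eta_{n}=\min(\xi_{1},\dots,\xi_{n})\in C_{b}(\Omega)$ decreasing.

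The one soft spot is exactly the one you flag: getting from $\lim_{n}\mathbb{\hat{E}}[\eta_{n}]$ to $\mathbb{\hat{E}}[X]$. No truncation by $(\cdot)\vee(-M)$ is needed, and your base case ($\xi_{n}\downarrow 0$) is not the right tool since the limit $\eta$ is neither $0$ nor continuous. Instead, run the compactness argument directly on the $\eta_{n}$: choose $P_{n}$ with $E_{P_{n}}[\eta_{n}]\geq\mathbb{\hat{E}}[\eta_{n}]-1/n$, extract $P_{n_{k}}\rightharpoonup P^{\ast}\in\mathcal{P}$, and for each fixed $m$ and $n_{k}\geq m$ use $E_{P_{n_{k}}}[\eta_{n_{k}}]\leq E_{P_{n_{k}}}[\eta_{m}]\rightarrow E_{P^{\ast}}[\eta_{m}]$; since the $\eta_{m}$ are uniformly bounded above, classical monotone convergence under $P^{\ast}$ gives $\lim_{n}\mathbb{\hat{E}}[\eta_{n}]\leq E_{P^{\ast}}[\eta]\leq\mathbb{\hat{E}}[\eta]$. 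Then, for each $P\in\mathcal{P}$, $E_{P}[\eta]=\lim_{m}E_{P}[\eta_{m}]\leq\lim_{m}\left(E_{P}[X_{m}]+E_{P}[|\eta_{m}-X_{m}|]\right)\leq E_{P}[X]+\varepsilon$, whence $\mathbb{\hat{E}}[\eta]\leq\mathbb{\hat{E}}[X]+\varepsilon$; combining and letting $\varepsilon\rightarrow 0$ closes the upper bound, and this route avoids the exchange-of-$\sup$-and-$\inf$ and the Portmanteau comparison you were worried about. The deduction of the second assertion from the first applied to $X_{n}-X\downarrow 0$ is correct. Note also that \cite{DHP11} proves the general case from the $C_{b}(\Omega)$ case using quasi-continuity of elements of $L_{G}^{1}(\Omega)$ rather than your norm-approximation by cylinder functions; both reductions work, and yours is arguably more elementary since it does not invoke the quasi-continuity characterization.
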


\section{Main results}

In this section, we consider the case $\underline{\sigma}=0$, then the
$G$-heat equation is%
\begin{equation}
\partial_{t}u-\frac{1}{2}\overline{\sigma}^{2}\left(  \partial_{xx}%
^{2}u\right)  ^{+}=0\text{, }u\left(  0,x\right)  =\varphi\left(  x\right)
.\label{e3}%
\end{equation}
The following theorem is our main result.

\begin{theorem}
\label{th1}Let $\underline{\sigma}=0$ and $\overline{\sigma}>0$. For each
given $T>0$ and $A\in\mathcal{B}(\mathbb{R})$, we have

\noindent$(\mathrm{i})$ If $\rho(A):=\inf\{|x|:x\in A\}=0$, then $c(\{B_{T}\in
A\})=1$;

\noindent$(\mathrm{ii})$ If $A\subset\lbrack0,\infty)$ or $A\subset
(-\infty,0]$, then $c(\{B_{T}\in A\})=\Phi\left(  \frac{\rho(A)}%
{\overline{\sigma}\sqrt{T}}\right)  $, where $\Phi(x)=\frac{2}{\sqrt{2\pi}%
}\int_{x}^{\infty}\exp\left(  -\frac{r^{2}}{2}\right)  dr$;

\noindent$(\mathrm{iii})$ If $\rho(A)\not =0$, $A\varsubsetneq\lbrack
0,\infty)$ and $A\varsubsetneq(-\infty,0]$, then%
\[
c(\{B_{T}\in A\})=\sum_{i=-\infty}^{\infty}sgn(i)\left[  \Phi\left(
\frac{|2i(\rho(A^{+})+\rho(A^{-}))+\rho(A^{-})|}{\overline{\sigma}\sqrt{T}%
}\right)  +\Phi\left(  \frac{|2i(\rho(A^{+})+\rho(A^{-}))+\rho(A^{+}%
)|}{\overline{\sigma}\sqrt{T}}\right)  \right]  ,
\]
where $sgn(x):=I_{[0,\infty)}(x)-I_{(-\infty,0)}(x)$, $\rho(A^{+}%
):=\inf\{x:x\in A,$ $x\geq0\}$, $\rho(A^{-}):=\inf\{-x:x\in A,$ $x\leq0\}$.
\end{theorem}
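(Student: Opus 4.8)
The plan is to reduce the computation of $c(\{B_T\in A\})$ to a boundary-hitting probability for a time-changed Brownian motion, then apply the reflection principle. Recall from Remark \ref{new-re-1} that $\mathcal{P}=\bar{\mathcal{P}}_1$, and each $P\in\mathcal{P}_1$ is the law of $X^v_t=\int_0^t v_s\,dW_s$ with $v_s\in[0,\overline\sigma]$. The quadratic variation $\langle X^v\rangle_T=\int_0^T v_s^2\,ds$ takes values in $[0,\overline\sigma^2 T]$, and for a \emph{constant} $v\equiv\sigma\in[0,\overline\sigma]$ we get $X^v_T\sim N(0,\sigma^2 T)$. So the first step is to identify, among all admissible $v$, which one maximizes $P(X^v_T\in A)$. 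My expectation is that the sup is already attained within the deterministic (even constant) controls, because the time change only rescales the Gaussian; intuitively, to push $B_T$ into a set $A$ that avoids $0$ we want the variance as large as possible when $A$ is one-sided, whereas if $0\in\bar A$ (more precisely $\rho(A)=0$) we can shrink the variance to $0$ and catch the atom at the origin, giving $c=1$ in case (i). Part (i) should therefore follow quickly: take $v\equiv\varepsilon\to0$, so $X^v_T\to\delta_0$ weakly, and since $\rho(A)=0$ there are points of $A$ arbitrarily close to $0$; a small limiting/closure argument using weak compactness of $\mathcal{P}$ and $c(F_n)\downarrow c(F)$ for closed sets gives $c(\{B_T\in A\})=1$.

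For part (ii), when $A\subset[0,\infty)$ (the case $A\subset(-\infty,0]$ being symmetric), I would argue that $P(X^v_T\in A)\le P(X^v_T\ge\rho(A))$, and among Gaussians $N(0,\sigma^2 T)$ the probability of $[\rho(A),\infty)$ is increasing in $\sigma$, so the best constant control is $v\equiv\overline\sigma$, giving the bound $\Phi(\rho(A)/(\overline\sigma\sqrt T))$ with the stated $\Phi$ (note $\Phi(x)=2\,\bar\Phi_{\mathrm{std}}(x)$ accounts for the two tails $\pm$, which is where one-sidedness of $A$ matters — only one tail actually contributes, but the factor $2$ is correct because... here I would need to be careful: if $A\subset[0,\infty)$ then only $P(X^v_T\ge\rho(A))$ matters, a single tail, so $\Phi$ as written with the $2$ must come from something else). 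Actually the cleaner route: the supremum over \emph{all} $v$ of $P(X^v_T\in A)$ for $A\subset[0,\infty)$ is handled by a nonconstant control — run $v=\overline\sigma$ until the quadratic variation clock reaches a level where the process sits at $\pm\rho(A)$, then by symmetry of Brownian motion and an optional-stopping/reflection argument one collects \emph{both} excursions, yielding the factor $2$. This connection to reflection is the crux and I will develop it by writing $X^v_T = W_{\langle X^v\rangle_T}$ (DDS time change) and choosing $v$ so that $\langle X^v\rangle_T=\overline\sigma^2 T$ always, reducing everything to a single Brownian motion $W$ on $[0,\overline\sigma^2 T]$ together with a \emph{sign-choosing} device — but sign-choosing is not available to a single $v$, so instead the supremum over the family $\mathcal{P}$ is what produces the $2$: for each outcome we may, along a suitable approximating sequence of $v$'s, steer toward whichever of $\pm\rho(A)$ is reachable.

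For part (iii), where $A$ meets both half-lines away from $0$, set $p:=\rho(A^+)>0$ and $q:=\rho(A^-)>0$. Now the optimal strategy must decide, dynamically, whether to aim the Brownian motion up past $p$ or down past $q$, and crucially, once it overshoots one barrier it can be reflected back toward the other — so the event $\{B_T\in A\}$ under the worst-case measure becomes the event that a Brownian motion run for total clock $\overline\sigma^2 T$, started at $0$, \emph{exits} the band in a way that after iterated reflections lands outside $(-q,p)$. The plan is: (a) show $\sup_{P\in\mathcal P}P(\{B_T\in A\}) = \sup_v P(X^v_T\notin(-q,p))$ by the same monotonicity-in-variance argument plus the observation that the best $v$ keeps the clock full, $\langle X^v\rangle_T\equiv\overline\sigma^2 T$; (b) for a \emph{single} Brownian path $W$ on $[0,\overline\sigma^2 T]$, the supremum over \emph{adapted sign choices} (which is what the family of $v$'s effectively buys us) of $I\{W^{\pm}_{\overline\sigma^2 T}\notin(-q,p)\}$ equals $I\{\sup_{t\le\overline\sigma^2 T}W_t\ge p \text{ or } -\inf_{t\le\overline\sigma^2 T}W_t\ge q \text{ or iterated reflections leave the band}\}$ — more precisely the relevant event is that the \emph{range} of $W$ over $[0,\overline\sigma^2 T]$ exceeds $p+q$, or suitable partial conditions; (c) compute $P(\text{that event})$ by the classical reflection formula for the distribution of the range / two-sided boundary hitting, i.e. $P(a<\inf, \sup<b)=\sum_i(\cdots)$, which is exactly the alternating theta-type series $\sum_i sgn(i)[\Phi(\cdots)+\Phi(\cdots)]$ appearing in the statement. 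I would cite or reproduce the standard formula $P(-q<\min_{[0,\tau]}W,\ \max_{[0,\tau]}W<p)=\sum_{i\in\mathbb Z}\big(\Phi_{\mathrm{std}}(\tfrac{2i(p+q)+q}{\sqrt\tau})-\Phi_{\mathrm{std}}(\tfrac{2i(p+q)-p}{\sqrt\tau})\big)$ and then massage it, using $p=\rho(A^+)$, $q=\rho(A^-)$, $p+q=\rho(A^+)+\rho(A^-)$, $\tau=\overline\sigma^2 T$, into the displayed series; the complement $1-P(\text{stay in band})$ together with careful bookkeeping of the two barriers produces the $sgn(i)$ alternation and the two summands $\Phi(|2i(p+q)+q|/(\overline\sigma\sqrt T))$ and $\Phi(|2i(p+q)+p|/(\overline\sigma\sqrt T))$.

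The main obstacle, I expect, is step (b) of part (iii): rigorously justifying that the supremum over the control family $\mathcal P$ of the probability of $\{B_T\in A\}$ equals the probability that a \emph{fixed} Brownian motion's trajectory, after optimal adapted reflections, escapes the band $(-\rho(A^-),\rho(A^+))$. This requires (1) an upper bound — for \emph{any} $v$, $P(X^v_T\in A)\le P(X^v_T\notin(-q,p))\le P(\text{range of } W \text{ on }[0,\langle X^v\rangle_T]\ge p+q)\le P(\text{range of }W\text{ on }[0,\overline\sigma^2 T]\ge p+q)$ using the DDS representation and monotonicity of the range in the time horizon; and (2) a matching lower bound — constructing, for each $\varepsilon$, an admissible $v^\varepsilon$ (a "bang-bang" control that runs at speed $\overline\sigma$ and switches sign based on which barrier the running max/min is closest to) whose terminal law charges $A$ with probability within $\varepsilon$ of the reflection-formula value, then passing to the weak closure $\bar{\mathcal P}_1$. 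The lower-bound construction is delicate because $A$ may be a messy Borel set rather than a half-line — I would first prove the result for $A$ of the form $(-\infty,-q]\cup[p,\infty)$, where the barrier picture is exact, and then argue that $c(\{B_T\in A\})$ for general $A$ with $\rho(A^\pm)=p,q$ is squeezed between the values for $(-\infty,-q]\cup[p,\infty)$ and for the "inner" set, showing both give the same series; monotonicity of $c$ and the fact that only $\rho(A^+),\rho(A^-)$ enter the formula make this squeeze work. Parts (i) and (ii) I expect to be comparatively routine once the variance-monotonicity and weak-closure lemmas are in place.
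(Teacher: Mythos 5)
Your guiding heuristic is the one the paper itself states just before Lemma \ref{le2}: the optimal control runs at speed $\overline{\sigma}$ until the first exit from $(-\rho(A^{-}),\rho(A^{+}))$ and then freezes, so the capacity should be a two\--sided exit probability given by the reflection (theta) series; and your final squeeze of a general $A$ between the two\--point set $\{-\rho(A^{-}),\rho(A^{+})\}$ and $(-\infty,-\rho(A^{-})]\cup[\rho(A^{+}),\infty)$ is exactly how the paper deduces Theorem \ref{th1} from Lemma \ref{le2}. However, the proposal contains concrete errors. First, your opening claim that the supremum is already attained among constant (or deterministic) controls is false, and it breaks your argument for part (i): for $A=\{1/n:n\geq1\}$ one has $\rho(A)=0$ and $0\notin A$, yet every constant control produces a Gaussian or $\delta_{0}$ law which assigns $A$ probability $0$. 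The correct lower bound $c(\{B_{T}\in A\})\geq c(\{B_{T}=1/n\})=\Phi\bigl(\tfrac{1/n}{\overline{\sigma}\sqrt{T}}\bigr)\rightarrow1$ needs the stopped control $\overline{\sigma}I_{[0,\tau_{1/n}]}$, whose terminal law has an atom at $1/n$; creating atoms with stopped controls is the entire content of the degenerate case. Second, in (iii)(b) the event you single out as ``more precisely'' relevant, namely that the range of $W$ exceeds $p+q$, is wrong: the correct event is the first exit from the band $(-q,p)$ (that is, $\max\geq p$ or $\min\leq-q$), which is strictly larger and is the one whose probability equals the stated series; your own complement of $P(-q<\min,\ \max<p)$ gives the exit event, so the two versions you write are inconsistent. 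Relatedly, no ``adapted sign choices'' or sign\--switching controls are involved: $v$ takes values in $[0,\overline{\sigma}]$, and the optimal control simply stops the clock at $\tau_{b}\wedge\tau_{l}$.

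The more serious structural gap is the upper bound. The capacity is $\sup_{P\in\mathcal{P}}P(\cdot)$ with $\mathcal{P}=\mathcal{\bar{P}}_{1}$ the \emph{weak closure}, and (\ref{e4}) is only an inequality ``$\leq$'': your chain of estimates holds for each $P\in\mathcal{P}_{1}$ but says nothing about measures in the closure, and for the closed set $\{B_{T}\in(-\infty,-q]\cup[p,\infty)\}$ the portmanteau inequality goes the wrong way (a weak limit may charge a closed set \emph{more} than the approximating measures do, exactly as $\delta_{0}$ charges $\{0\}$ while $N(0,\varepsilon^{2})$ does not). This is precisely what the paper's method is designed to handle: it constructs an explicit function $u_{n}$ with $u_{n}(0,\cdot)\downarrow I_{\{b,l\}}$, verifies that $u_{n}$ is a viscosity solution of the degenerate $G$-heat equation so that $\mathbb{\hat{E}}[u_{n}(0,B_{T})]=u_{n}(T,0)$ exactly, and then invokes the monotone convergence property of Theorem \ref{new-31} to pass to the indicator, which delivers the upper and lower bounds simultaneously. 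To rescue your route you would need either a verification argument (a smooth supersolution dominating $I_{A}$, plus It\^{o}'s formula applied uniformly over all admissible $v$ --- which is essentially the paper's PDE step in disguise), or to enlarge the closed target to a slightly bigger open set, for which $\sup_{\mathcal{\bar{P}}_{1}}=\sup_{\mathcal{P}_{1}}$ does hold, and then shrink the enlargement using continuity of the series in the barriers. As written, the upper bound, and hence the matching of your two bounds, is not established.
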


In order to prove this theorem, we need the following lemma. By Remark
\ref{new-re-1}, we know that
\begin{equation}
\sup_{v\in M^{2}(0,T;[0,\overline{\sigma}])}P^{W}\left(  \left\{  \int_{0}%
^{T}v_{s}dW_{s}\in A\right\}  \right)  \leq c(\{B_{T}\in A\})\text{ for }%
A\in\mathcal{B}(\mathbb{R}), \label{e4}%
\end{equation}
where $(W_{t})_{t\geq0}$ is a $1$-dimensional classical Brownian motion
defined on a Wiener probability space $(\tilde{\Omega},\mathcal{\tilde{F}%
},P^{W})$. The proof of the following lemma is accomplished by the proper
construction of $u_{n}$ on the assumption that the equal sign in (\ref{e4})
holds and the optimal control in (\ref{e4}) is $\left(  \overline{\sigma
}I_{[0,\tau_{b}\wedge\tau_{l}]}(s)\right)  _{s\leq T}$ for $A=\{b,l\}$ with
$b\leq0\leq l$, where%
\[
\tau_{a}:=\inf\{t\geq0:\overline{\sigma}W_{t}=a\}\text{ for }a\in\mathbb{R}.
\]
In turn, the conclusion of the lemma also shows that the above assumption is true.

\begin{lemma}
\label{le2}Let $\underline{\sigma}=0$ and $\overline{\sigma}>0$. Then, for
each given $T>0$ and $b<0<l$, we have%
\[
c(\{B_{T}\in\{b,l\}\})=\sum_{i=-\infty}^{\infty}sgn(i)\left[  \Phi\left(
\frac{|2i(l-b)-b|}{\overline{\sigma}\sqrt{T}}\right)  +\Phi\left(
\frac{|2i(l-b)+l|}{\overline{\sigma}\sqrt{T}}\right)  \right]  .
\]

\end{lemma}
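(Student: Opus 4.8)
\textbf{Proof plan for Lemma \ref{le2}.}

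The plan is to compute the lower bound in (\ref{e4}) explicitly by choosing the candidate control $v_s = \overline{\sigma}\,I_{[0,\tau_b\wedge\tau_l]}(s)$, and then to match it with an upper bound obtained from a carefully constructed viscosity supersolution of the $G$-heat equation (\ref{e3}). For the lower bound: under $P^W$, with the process $X_t^v = \int_0^t v_s\,dW_s = \overline{\sigma}W_{t\wedge\tau_b\wedge\tau_l}$, the event $\{X_T^v \in \{b,l\}\}$ is exactly the event $\{\tau_b\wedge\tau_l \le T\}$ that the scaled Brownian motion $\overline{\sigma}W$ has exited the interval $(b,l)$ by time $T$. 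Thus the lower bound equals $P^W(\tau_b\wedge\tau_l \le T)$, and the classical reflection-principle / image-method formula for the first-exit time of Brownian motion from an interval gives precisely the alternating series $\sum_{i=-\infty}^{\infty} sgn(i)\big[\Phi(|2i(l-b)-b|/(\overline{\sigma}\sqrt T)) + \Phi(|2i(l-b)+l|/(\overline{\sigma}\sqrt T))\big]$; here the width of the interval is $l-b$, which explains the period $2(l-b)$ in the sum, and the two families of terms correspond to reflections about the two endpoints. This identity is standard, so I would only cite it or derive it in a line from the tail of the heat kernel on $(b,l)$.

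For the matching upper bound, I need to show $c(\{B_T \in \{b,l\}\}) \le$ that same quantity. The approach is the one sketched in the remark before the lemma: build a sequence $u_n \in C([0,\infty)\times\mathbb{R})$ of (smooth approximations of) viscosity supersolutions of (\ref{e3}) with terminal-type data $u_n(0,\cdot)$ decreasing to $I_{\{b,l\}}$, so that $\mathbb{\hat{E}}[u_n(T,B_0)] = u_n(T,0) \to $ the desired value, while $c(\{B_T \in \{b,l\}\}) = \hat{c}(I_{\{b,l\}}(B_T)) \le u_n(T,0)$ by the comparison principle and monotonicity. The natural candidate for the relevant stationary profile is $h(x) := P(\text{Brownian motion started at }x\text{ exits }(b,l)\text{ by time }T)$ expressed via the series above with $x$ in place of $0$; one checks that on $(b,l)$ this function $h$, as a function of $(t,x)$ with $t$ playing the role of remaining time, satisfies $\partial_t h = \frac12\overline{\sigma}^2\partial_{xx}^2 h$ with $\partial_{xx}^2 h \ge 0$ (since $h$ is convex in $x$ on $(b,l)$, being a hitting probability that is $1$ at the endpoints and minimal in the interior), hence $\partial_t h - \frac12\overline{\sigma}^2(\partial_{xx}^2 h)^+ = 0$; outside $[b,l]$ one extends by $1$, where $\partial_t h - \frac12\overline{\sigma}^2(\partial_{xx}^2 h)^+ = 0$ trivially. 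The continuity and the convexity/concavity bookkeeping at the junction points $x=b, x=l$ then make the glued function a viscosity supersolution of (\ref{e3}) with initial data dominating $I_{\{b,l\}}$ after a standard mollification.

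The main obstacle I anticipate is twofold. First, verifying that the glued function $u$ is genuinely a viscosity supersolution at the non-smooth points $x \in \{b,l\}$: one must rule out the existence of a $C^2$ test function touching $u$ from below there at which $\partial_t\phi - G(\partial_{xx}^2\phi) < 0$; this requires knowing the one-sided behavior of $\partial_x h$ at the endpoints (the interior derivative is finite and the exterior derivative is $0$, so $u$ has a convex kink, which is exactly the favorable direction for supersolutions of a degenerate-elliptic equation of the form $\partial_t u - G(\partial_{xx}^2 u)$, since test functions from below have bounded curvature). Second, and more delicate for the sharpness of the whole lemma, is justifying that the lower bound in (\ref{e4}) is actually attained, i.e. that no control beating $\overline{\sigma}I_{[0,\tau_b\wedge\tau_l]}$ exists — this is what the supersolution argument secretly provides, since the two bounds coincide. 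I would therefore organize the proof so that the lower bound (explicit series) and the upper bound (supersolution at $0$) are computed independently and seen to agree, and note in a remark that the agreement retroactively identifies the optimal control, as promised in the paragraph preceding the lemma. A minor technical point to handle carefully is the convergence of the alternating series (it converges because $\Phi$ has Gaussian tails, so the terms decay like $e^{-c i^2}$), which is needed to make sense of all the above manipulations.
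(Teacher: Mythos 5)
Your plan is essentially correct and arrives at the same explicit function (the reflection series for the exit distribution of $\overline{\sigma}W$ from $(b,l)$, regularized in time and glued to a constant-in-space profile outside the interval), but it organizes the argument differently from the paper. The paper proves that the single regularized function $u_n$ is a full viscosity \emph{solution} of (\ref{e3}) --- both subsolution (Steps 1--3, including the kink analysis at $x^*\in\{b,l\}$ where any admissible test function from above forces $\partial_t\psi(t^*,x^*)=0$) and supersolution (Steps 4--5) --- so that $\mathbb{\hat{E}}[u_n(0,B_T)]=u_n(T,0)$ holds as an exact identity; the value of the capacity then follows in one stroke from $u_n(0,\cdot)\downarrow I_{\{b,l\}}$ and the downward monotone convergence theorem (Theorem \ref{new-31}). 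You instead split the work into a lower bound, obtained by plugging the explicit control $\overline{\sigma}I_{[0,\tau_b\wedge\tau_l]}$ into the representation (\ref{e4}) and identifying the event with $\{\tau_b\wedge\tau_l\le T\}$, and an upper bound via a supersolution and comparison. This trade is legitimate and arguably cleaner on the lower-bound side, since it replaces the delicate subsolution verification by a purely probabilistic computation; what you give up is that your supersolution only needs to dominate $I_{\{b,l\}}$, so extending by $1$ outside $[b,l]$ (initial data tending to $I_{(-\infty,b]\cup[l,\infty)}$ rather than $I_{\{b,l\}}$) is harmless for you, whereas the paper's choice of the one-sided hitting probability $\Phi\bigl(\tfrac{|b-x|\wedge|l-x|}{\overline{\sigma}\sqrt{1/n+t}}\bigr)$ there is forced by its need for a two-sided identity; incidentally, the coincidence of your two bounds re-proves the first identity of Remark \ref{re2}.

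Two soft spots to firm up when writing this out. First, your justification of convexity of $h(t,\cdot)$ on $(b,l)$ (``$1$ at the endpoints and minimal in the interior'') is not an argument; the correct route, which the paper uses, is that $h(t,x)=P^{W}(\tau_{b-x}\wedge\tau_{l-x}\le t)$ is strictly increasing in $t$, so $\partial_t h>0$, and then the classical heat equation $\partial_t h=\tfrac12\overline{\sigma}^2\partial^2_{xx}h$ forces $\partial^2_{xx}h>0$. Second, the ``standard mollification'' needed to make the glued function continuous down to $t=0$ is exactly the paper's $\tfrac1n$ time shift; without some such device the comparison argument cannot be applied, since Definition \ref{d1} requires continuity on $[0,\infty)\times\mathbb{R}$.
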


\begin{proof}
For each fixed $n\geq1$, we first prove that
\begin{align*}
u_{n}(t,x)  &  :=\sum_{i=-\infty}^{\infty}sgn(i)\left[  \Phi\left(
\frac{|2i(l-b)+x-b|}{\overline{\sigma}\sqrt{\frac{1}{n}+t}}\right)
+\Phi\left(  \frac{|2i(l-b)+l-x|}{\overline{\sigma}\sqrt{\frac{1}{n}+t}%
}\right)  \right]  I_{(b,l)}(x)\\
&  \ \ \ \ \ +\Phi\left(  \frac{|b-x|\wedge|l-x|}{\overline{\sigma}\sqrt
{\frac{1}{n}+t}}\right)  I_{(-\infty,b]\cup\lbrack l,\infty)}(x)
\end{align*}
is a viscosity solution of (\ref{e3}) on $(0,\infty)\times\mathbb{R}$.

On the one hand, we need to prove that $u_{n}$ is a viscosity subsolution of
(\ref{e3}). Indeed, it suffices to do three steps to show that $\partial
_{t}\psi\left(  t^{\ast},x^{\ast}\right)  -\frac{1}{2}\overline{\sigma}%
^{2}\left(  \partial_{xx}^{2}\psi\left(  t^{\ast},x^{\ast}\right)  \right)
^{+}\leq0$ for all $\left(  t^{\ast},x^{\ast}\right)  \in\left(
0,\infty\right)  \times\mathbb{R}$, $\psi\in C^{2}\left(  \left(
0,\infty\right)  \times\mathbb{R}\right)  $ such that $\psi\left(  t^{\ast
},x^{\ast}\right)  =u_{n}\left(  t^{\ast},x^{\ast}\right)  $ and $\psi\geq
u_{n}$.

Step 1. If $x^{\ast}\in(-\infty,b)\cup(l,\infty)$, it is easy to know from
extreme value theory that
\[
\partial_{xx}^{2}\psi\left(  t^{\ast},x^{\ast}\right)  \geq\partial_{xx}%
^{2}u_{n}\left(  t^{\ast},x^{\ast}\right)  ,\text{ \ }\partial_{t}\psi\left(
t^{\ast},x^{\ast}\right)  =\partial_{t}u_{n}\left(  t^{\ast},x^{\ast}\right)
\]
and it follows by simple calculation that%
\[
\partial_{t}u_{n}\left(  t^{\ast},x^{\ast}\right)  =-\frac{1}{2}\Phi^{\prime
}\left(  \frac{|b-x^{\ast}|\wedge|l-x^{\ast}|}{\overline{\sigma}\sqrt{\frac
{1}{n}+t^{\ast}}}\right)  \frac{|b-x^{\ast}|\wedge|l-x^{\ast}|}{\overline
{\sigma}\sqrt{\left(  \frac{1}{n}+t^{\ast}\right)  ^{3}}}%
\]
and%
\[
\partial_{xx}^{2}u_{n}\left(  t^{\ast},x^{\ast}\right)  =\Phi^{\prime\prime
}\left(  \frac{|b-x^{\ast}|\wedge|l-x^{\ast}|}{\overline{\sigma}\sqrt{\frac
{1}{n}+t^{\ast}}}\right)  \frac{1}{\overline{\sigma}^{2}\left(  \frac{1}%
{n}+t^{\ast}\right)  }.
\]
By the definition of $\Phi\left(  x\right)  $, we know that $\Phi
^{^{\prime\prime}}\left(  x\right)  >0$ holds if $x>0$, and
\[
\Phi^{^{\prime\prime}}\left(  x\right)  =-x\Phi^{^{\prime}}\left(  x\right)
.
\]
Then we obtain
\[
\partial_{xx}^{2}u_{n}\left(  t^{\ast},x^{\ast}\right)  >0\text{ and }%
\partial_{t}u_{n}\left(  t^{\ast},x^{\ast}\right)  -\frac{1}{2}\overline
{\sigma}^{2}\partial_{xx}^{2}u_{n}\left(  t^{\ast},x^{\ast}\right)  =0,
\]
which implies $\partial_{t}\psi\left(  t^{\ast},x^{\ast}\right)  -\frac{1}%
{2}\overline{\sigma}^{2}\left(  \partial_{xx}^{2}\psi\left(  t^{\ast},x^{\ast
}\right)  \right)  ^{+}\leq0$.

Step 2. If $x^{\ast}\in(b,l)$, by simple calculation, we can still get%
\begin{align*}
\partial_{t}u_{n}\left(  t^{\ast},x^{\ast}\right)   &  =-\frac{1}{2}%
\sum_{i=-\infty}^{\infty}sgn(i)\left[  \Phi^{^{\prime}}\left(  \frac
{|2i(l-b)+x^{\ast}-b|}{\overline{\sigma}\sqrt{\frac{1}{n}+t^{\ast}}}\right)
\frac{|2i(l-b)+x^{\ast}-b|}{\overline{\sigma}\sqrt{\left(  \frac{1}{n}%
+t^{\ast}\right)  ^{3}}}\right. \\
&  \left.  \text{ \ \ }+\Phi^{^{\prime}}\left(  \frac{|2i(l-b)+l-x^{\ast}%
|}{\overline{\sigma}\sqrt{\frac{1}{n}+t^{\ast}}}\right)  \frac
{|2i(l-b)+l-x^{\ast}|}{\overline{\sigma}\sqrt{\left(  \frac{1}{n}+t^{\ast
}\right)  ^{3}}}\right]
\end{align*}
and%
\[
\partial_{xx}^{2}u_{n}\left(  t^{\ast},x^{\ast}\right)  =\sum_{i=-\infty
}^{\infty}sgn(i)\left[  \Phi^{^{\prime\prime}}\left(  \frac{|2i(l-b)+x^{\ast
}-b|}{\overline{\sigma}\sqrt{\frac{1}{n}+t^{\ast}}}\right)  +\Phi
^{^{\prime\prime}}\left(  \frac{|2i(l-b)+l-x^{\ast}|}{\overline{\sigma}%
\sqrt{\frac{1}{n}+t^{\ast}}}\right)  \right]  \frac{1}{\overline{\sigma}%
^{2}\left(  \frac{1}{n}+t^{\ast}\right)  ^{2}}.
\]
It can be verified by the definition of $\Phi\left(  x\right)  $ that
$\partial_{t}u_{n}\left(  t^{\ast},x^{\ast}\right)  -\frac{1}{2}%
\overline{\sigma}^{2}\partial_{xx}^{2}u_{n}\left(  t^{\ast},x^{\ast}\right)
=0$. Since $\partial_{xx}^{2}\psi\left(  t^{\ast},x^{\ast}\right)
\geq\partial_{xx}^{2}u_{n}\left(  t^{\ast},x^{\ast}\right)  $, and if we want
to replicate the idea we had in step 1, we just have to prove $\partial
_{t}u_{n}\left(  t^{\ast},x^{\ast}\right)  >0$. It is well-known that the
stopping time $\tau_{b-x}\wedge\tau_{l-x}$ has the following density for
$x\in(b,l)$ (see \cite{KS})%
\begin{align*}
P^{W}\left(  \left\{  \tau_{b-x}\wedge\tau_{l-x}\in ds\right\}  \right)   &
=\frac{1}{\sqrt{2\pi\overline{\sigma}^{2}s^{3}}}\sum_{i=-\infty}^{\infty
}\left\{  (2i(l-b)-b+x)\exp\left(  -\frac{(2i(l-b)-b+x)^{2}}{2\overline
{\sigma}^{2}s}\right)  \right. \\
&  \ \ \ \left.  +(2i(l-b)+l-x)\exp\left(  -\frac{(2i(l-b)+l-x)^{2}%
}{2\overline{\sigma}^{2}s}\right)  \right\}  ds\text{.}%
\end{align*}
By tedious calculation, we can get%
\[
u_{n}\left(  t,x\right)  =\int_{0}^{\frac{1}{n}+t}P^{W}\left(  \left\{
\tau_{b-x}\wedge\tau_{l-x}\in ds\right\}  \right)  \text{ for }x\in(b,l),
\]
which implies $\partial_{t}u_{n}\left(  t^{\ast},x^{\ast}\right)  >0$, so, we
get $\partial_{xx}^{2}u_{n}\left(  t^{\ast},x^{\ast}\right)  >0$. Moreover,%
\[
\partial_{t}\psi\left(  t^{\ast},x^{\ast}\right)  -\frac{1}{2}\overline
{\sigma}^{2}\left(  \partial_{xx}^{2}\psi\left(  t^{\ast},x^{\ast}\right)
\right)  ^{+}\leq0.
\]

Step 3. If $x^{\ast}\in\{b,l\}$, we know by the definition of $\psi$ that
\[
\psi\left(  t,x^{\ast}\right)  \geq u_{n}\left(  t,x^{\ast}\right)  =1\text{
and }\psi\left(  t^{\ast},x^{\ast}\right)  =u_{n}\left(  t^{\ast},x^{\ast
}\right)  =1,
\]
it then follows that $\partial_{t}\psi\left(  t^{\ast},x^{\ast}\right)  =0$.
Hence, we obtain $\partial_{t}\psi\left(  t^{\ast},x^{\ast}\right)  -\frac
{1}{2}\overline{\sigma}^{2}\left(  \partial_{xx}^{2}\psi\left(  t^{\ast
},x^{\ast}\right)  \right)  ^{+}\leq0$. Thus, from step 1 to step 3, we know
that $u_{n}$ is a viscosity subsolution of (\ref{e3}).

On the other hand, we need to prove that $u_{n}$ is a viscosity supersolution
of (\ref{e3}). So, let us show in two steps that $\partial_{t}\psi\left(
t^{\ast},x^{\ast}\right)  -\frac{1}{2}\overline{\sigma}^{2}\left(
\partial_{xx}^{2}\psi\left(  t^{\ast},x^{\ast}\right)  \right)  ^{+}\geq0$ for
all $\left(  t^{\ast},x^{\ast}\right)  \in\left(  0,\infty\right)
\times\mathbb{R}$, $\psi\in C^{2}\left(  \left(  0,\infty\right)
\times\mathbb{R}\right)  $ such that $\psi\left(  t^{\ast},x^{\ast}\right)
=u_{n}\left(  t^{\ast},x^{\ast}\right)  $ and $\psi\leq u_{n}$.

Step 4. If $x^{\ast}\not \in \{b,l\}$, similar to the proof idea of step 1 and
step 2, we can also get the same results as follows:%
\[
\partial_{xx}^{2}\psi\left(  t^{\ast},x^{\ast}\right)  \leq\partial_{xx}%
^{2}u_{n}\left(  t^{\ast},x^{\ast}\right)  ,\text{ \ }\partial_{xx}^{2}%
u_{n}\left(  t^{\ast},x^{\ast}\right)  >0,\text{ \ }\partial_{t}\psi\left(
t^{\ast},x^{\ast}\right)  =\partial_{t}u_{n}\left(  t^{\ast},x^{\ast}\right)
.
\]
and
\[
\partial_{t}u_{n}\left(  t^{\ast},x^{\ast}\right)  -\frac{1}{2}\overline
{\sigma}^{2}\partial_{xx}^{2}u_{n}\left(  t^{\ast},x^{\ast}\right)  =0
\]
Thereby, we have $\left(  \partial_{xx}^{2}\psi\left(  t^{\ast},x^{\ast
}\right)  \right)  ^{+}\leq\left(  \partial_{xx}^{2}u_{n}\left(  t^{\ast
},x^{\ast}\right)  \right)  ^{+}$. Moreover, we show that
\begin{align*}
&  \partial_{t}\psi\left(  t^{\ast},x^{\ast}\right)  -\frac{1}{2}%
\overline{\sigma}^{2}\left(  \partial_{xx}^{2}\psi\left(  t^{\ast},x^{\ast
}\right)  \right)  ^{+}\\
&  \geq\partial_{t}u_{n}\left(  t^{\ast},x^{\ast}\right)  -\frac{1}%
{2}\overline{\sigma}^{2}\left(  \partial_{xx}^{2}u_{n}\left(  t^{\ast}%
,x^{\ast}\right)  \right)  ^{+}\\
&  =\partial_{t}u_{n}\left(  t^{\ast},x^{\ast}\right)  -\frac{1}{2}%
\overline{\sigma}^{2}\partial_{xx}^{2}u_{n}\left(  t^{\ast},x^{\ast}\right) \\
&  =0
\end{align*}

Step 5. If $x^{\ast}\in\{b,l\}$, then we know that
\[
\psi\left(  t^{\ast},x^{\ast}\right)  =u_{n}\left(  t^{\ast},x^{\ast}\right)
\text{ and }\psi\left(  t^{\ast},x\right)  \leq u_{n}\left(  t^{\ast
},x\right)  ,
\]
which implies that%
\begin{equation}
\partial_{x+}\psi\left(  t^{\ast},x^{\ast}\right)  \leq\partial_{x+}%
u_{n}\left(  t^{\ast},x^{\ast}\right)  \text{ and }\partial_{x-}\psi\left(
t^{\ast},x^{\ast}\right)  \geq\partial_{x-}u_{n}\left(  t^{\ast},x^{\ast
}\right)  . \label{newe2}%
\end{equation}
It is easy to check by the definition of $\Phi\left(  x\right)  $ that
\begin{align*}
\partial_{x-}u_{n}\left(  t^{\ast},b\right)   &  =\frac{2}{\bar{\sigma}%
\sqrt{2\pi\left(  \frac{1}{n}+t^{\ast}\right)  }}>0,\\
\partial_{x+}u_{n}\left(  t^{\ast},l\right)   &  =-\frac{2}{\bar{\sigma}%
\sqrt{2\pi\left(  \frac{1}{n}+t^{\ast}\right)  }}<0.
\end{align*}
Now we claim that $\partial_{x+}u_{n}\left(  t^{\ast},b\right)  <0$.
Otherwise, $\partial_{x+}u_{n}\left(  t^{\ast},b\right)  \geq0$. Noting that
$\partial_{xx}^{2}u_{n}\left(  t^{\ast},x\right)  >0$ for $x\in(b,l)$ and
$u_{n}\left(  t^{\ast},b\right)  =1$, we obtain $u_{n}\left(  t^{\ast
},x\right)  >1$ for $x\in(b,l)$, which contradicts to $u_{n}\leq1$. Hence, we
have $\partial_{x+}u_{n}\left(  t^{\ast},b\right)  <0$. Similarly, we can get
$\partial_{x-}u_{n}\left(  t^{\ast},l\right)  >0$. So we can not find $\psi\in
C^{2}\left(  \left(  0,\infty\right)  \times\mathbb{R}\right)  $ satisfying
(\ref{newe2}). Thus, from (H4)-(H5), we know that $u_{n}$ is also a viscosity
supersolution of (\ref{e3}).

In conclusion, $u_{n}$ is a viscosity solution of (\ref{e3}). Then we obtain
\[
\mathbb{\hat{E}}\left[  u_{n}\left(  0,B_{T}\right)  \right]  =u_{n}(T,0).
\]
Noting that $u_{n}\left(  0,x\right)  \downarrow I_{\{b,l\}}(x)$, we can
deduce by Theorem \ref{new-31} that%
\[
c(\{B_{T}\in\{b,l\}\})=\mathbb{\hat{E}}\left[  I_{\{b,l\}}\left(
B_{T}\right)  \right]  =\lim_{n\rightarrow\infty}\mathbb{\hat{E}}\left[
u_{n}\left(  0,B_{T}\right)  \right]  =\lim_{n\rightarrow\infty}u_{n}(T,0),
\]
which implies the desired result.
\end{proof}

\begin{remark}
\label{re2}Under the condition of Lemma \ref{le2}, by using the similar
method, we can get
\[
c(\{B_{T}\in\{b,l\}\})=c(\{B_{T}\in(-\infty,b]\cup\lbrack l,\infty)\})\text{
for }b<0<l
\]
and%
\[
c(\{B_{T}=a\})=c(\{B_{T}\geq|a|\})=c(\{B_{T}\leq-|a|\})=\Phi\left(  \frac
{|a|}{\overline{\sigma}\sqrt{T}}\right)  \text{ for }a\in\mathbb{R}\text{.}%
\]
The value $c(\{B_{T}\geq|a|\})$ was obtained in \cite{PWX, PYY} by using
\textquotedblleft similarity solution" method.
\end{remark}

\textbf{Proof of Theorem \ref{th1}} If $\rho(A)=0$, we can find a sequence
$\{a_{n}:n\geq1\}\subset A$ such that $a_{n}\rightarrow0$. By Remark
\ref{re2}, we get%
\[
c(\{B_{T}\in A\})\geq\lim_{n\rightarrow\infty}c(\{B_{T}=a_{n}\})=\lim
_{n\rightarrow\infty}\Phi\left(  \frac{|a_{n}|}{\overline{\sigma}\sqrt{T}%
}\right)  =1,
\]
which implies (i).

If $A\subset\lbrack0,\infty)$, we can find a sequence $\{x_{n}:n\geq1\}\subset
A$ such that $x_{n}\rightarrow\rho(A)$. By Remark \ref{re2}, we know that%
\[
c(\{B_{T}\in A\})\geq\lim_{n\rightarrow\infty}c(\{B_{T}=x_{n}\})=\Phi\left(
\frac{\rho(A)}{\overline{\sigma}\sqrt{T}}\right)  .
\]
Noting that $A\subset\lbrack\rho(A),\infty)$, by Remark \ref{re2}, we get%
\[
c(\{B_{T}\in A\})\leq c(\{B_{T}\geq\rho(A)\})=\Phi\left(  \frac{\rho
(A)}{\overline{\sigma}\sqrt{T}}\right)  .
\]
Thus we have $c(\{B_{T}\in A\})=\Phi\left(  \frac{\rho(A)}{\overline{\sigma
}\sqrt{T}}\right)  $. By the similar method for $A\subset(-\infty,0]$, then we
obtain (ii).

If $\rho(A)\not =0$, $A\varsubsetneq\lbrack0,\infty)$ and $A\varsubsetneq
(-\infty,0]$, then we can find two sequences $\{b_{n}:n\geq1\}$ and
$\{c_{n}:n\geq1\}$ in $A$ such that $b_{n}<0<c_{n}$, $-b_{n}\rightarrow
\rho(A^{-})$ and $c_{n}\rightarrow\rho(A^{+})$. By Lemma \ref{le2}, Remark
\ref{re2} and $A\subset(-\infty,-\rho(A^{-})]\cup\lbrack\rho(A^{+}),\infty)$,
we get%
\[
c(\{B_{T}\in A\})\geq\lim_{n\rightarrow\infty}c(\{B_{T}\in\{b_{n}%
,c_{n}\}\})=c(\{B_{T}\in\{-\rho(A^{-}),\rho(A^{+})\}\}),
\]%
\[
c(\{B_{T}\in A\})\leq c(\{B_{T}\in(-\infty,-\rho(A^{-})]\cup\lbrack\rho
(A^{+}),\infty)\}),
\]
which implies $c(\{B_{T}\in A\})=c(\{B_{T}\in\{-\rho(A^{-}),\rho(A^{+})\}\})$.
Thus we obtain (iii). $\Box$

\section{Application to $G$-expectation}

For each $\phi_{n}\in C_{b}(\mathbb{R})$ such that $\phi_{n}\downarrow I_{A}$,
we know by (\ref{newnew4}) that%
\begin{equation}
\mathbb{\hat{E}}\left[  \phi_{n}\left(  B_{T}\right)  \right]  \downarrow
c(\{B_{T}\in A\}). \label{newnew5}%
\end{equation}
The following theorem is the application of Theorem \ref{th1}.

\begin{theorem}
Let $\underline{\sigma}=0$ and $\overline{\sigma}>0$. Then, for each given
$T>0$, $A\in\mathcal{B}(\mathbb{R})$ with $A\not =\emptyset$ and
$A\not =\mathbb{R}$, we have $I_{A}(B_{T})\notin L_{G}^{1}(\Omega)$.
\end{theorem}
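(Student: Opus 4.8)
The strategy is to argue by contradiction: suppose $I_A(B_T) \in L_G^1(\Omega)$ for some Borel set $A$ with $\emptyset \neq A \neq \mathbb{R}$. The plan is to exploit the fact that membership in $L_G^1(\Omega)$ forces the indicator to have a quasi-continuous version, and then show that a quasi-continuous function agreeing q.s. with $I_A(B_T)$ cannot exist, because $I_A(B_T)$ jumps between the value $0$ and the value $1$ on sets that are both non-polar and ``topologically intertwined'' in $\Omega$.

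First I would reduce to a convenient normal form. Using $c(\{B_T \in A\}) = c(\{B_T \in A^c\})$ type symmetry is not available, but by Theorem \ref{th1} we always have $c(\{B_T \in A\}) > 0$ whenever $A \neq \emptyset$ (the capacity is a strictly positive value of the form $\Phi(\cdot)$ or the series in (iii), never zero), and similarly $c(\{B_T \in A^c\}) > 0$ whenever $A^c \neq \emptyset$; here one must be slightly careful because $A^c$ need not be Borel-nice in the same way, but $A^c$ is still Borel and Theorem \ref{th1} applies to it directly, giving $c(\{B_T \in A^c\}) > 0$ as long as $A^c \neq \emptyset$, i.e. $A \neq \mathbb{R}$. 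So both $\{B_T \in A\}$ and $\{B_T \in A^c\}$ are non-polar closed-ish events. Next, pick a point $a \in A$ and a point $a' \in A^c$. Along the canonical path space $\Omega = C[0,\infty)$, consider paths that pass through $a$ at time $T$ versus paths that pass through $a'$ at time $T$: since $c(\{B_T = a\}) = \Phi(|a|/(\overline\sigma\sqrt T)) > 0$ for every $a$ (Remark \ref{re2}), the set $\{B_T = a\}$ is non-polar, and it is a closed subset of $\Omega$ on which $I_A(B_T) \equiv 1$; likewise $\{B_T = a'\}$ is non-polar, closed, and $I_A(B_T) \equiv 0$ there.

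The core step is then the topological one. Let $Y$ be a quasi-continuous version of $I_A(B_T)$. For $\varepsilon > 0$ choose an open $O \subset \Omega$ with $c(O) < \varepsilon$ and $Y|_{O^c}$ continuous. Because $\{B_T = a\}$ is non-polar, for $\varepsilon$ small enough there exists $\omega_1 \in \{B_T = a\} \cap O^c$, and similarly $\omega_2 \in \{B_T = a'\} \cap O^c$ (using $c(\{B_T = a\}) > \varepsilon$ and $c(\{B_T = a'\}) > \varepsilon$; after shrinking, also $\{B_T=a\}\setminus N$ and $\{B_T=a'\}\setminus N$ are non-polar for the polar set $N$ off which $Y = I_A(B_T)$, so we may take $\omega_i$ with $Y(\omega_i) = I_A(B_T(\omega_i))$, giving $Y(\omega_1) = 1$, $Y(\omega_2) = 0$). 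Now connect $\omega_1$ to $\omega_2$ by the straight-line homotopy $\omega^\lambda := (1-\lambda)\omega_1 + \lambda\omega_2$, $\lambda \in [0,1]$, which is a continuous path in $\Omega$. Along it, $B_T(\omega^\lambda) = (1-\lambda)a + \lambda a'$ sweeps the whole segment between $a$ and $a'$. The idea is that this one-parameter family is ``large'' enough in capacity that it cannot be swallowed by the small open set $O$: more precisely, I would show that the image event $\{\omega^\lambda : \lambda \in [0,1]\}$, or rather a tube around it, has capacity bounded below independently of $\varepsilon$, so for small $\varepsilon$ there is $\lambda^* \in [0,1]$ with $\omega^{\lambda^*} \in O^c$ and moreover $\omega^{\lambda^*}$ can be chosen in $O^c \setminus N$; continuity of $Y|_{O^c}$ along the connected set $\{\omega^\lambda\} \cap O^c$ near the endpoints then forces $Y$ to take a value strictly between $0$ and $1$ on a non-polar set, contradicting $Y = I_A(B_T) \in \{0,1\}$ q.s.

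The main obstacle I anticipate is making the capacity lower bound on the connecting family rigorous --- controlling $c$ of a tube around the segment $\{\omega^\lambda\}$ from below uniformly. The cleanest route is probably to avoid homotopies in path space altogether and instead argue purely at the level of the time-$T$ marginal: since $c(\{B_T \in \cdot\})$ dominates $\sup_{v} P^W(\{\int_0^T v\,dW \in \cdot\})$ by (\ref{e4}), and the diffusion $\int_0^T \overline\sigma\,dW$ has a density that is strictly positive on all of $\mathbb{R}$, one gets $c(\{B_T \in U\}) > 0$ for every nonempty open $U \subset \mathbb{R}$, and in fact $c(\{B_T \in (c,d)\}) \geq P^W(\overline\sigma W_T \in (c,d)) > 0$. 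So the set of paths with $B_T$ in any fixed small open interval around an intermediate point $m = \frac{a+a'}{2}$ is non-polar, and one picks in it a path $\omega_3$ with $Y(\omega_3) = I_A(m) \in \{0,1\}$. Combined with non-polar sets of paths landing at $a$ and at $a'$, and the observation that for a quasi-continuous $Y$ and any two non-polar closed sets $F_0 = \{Y = 0\}$, $F_1 = \{Y = 1\}$ one can find, after removing an open set of small capacity, points of $F_0$ and $F_1$ arbitrarily close in the uniform norm (because paths landing near $a$, near $m$, near $a'$ can be taken to agree on $[0, T-\delta]$ and differ only on a short final interval, making them uniformly close), we contradict continuity of $Y$ on $O^c$: $Y$ would be continuous at a point yet take both values $0$ and $1$ in every neighborhood. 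Executing this ``uniformly close representatives of both level sets'' lemma carefully --- ensuring the representatives can be chosen off the polar exceptional set and off $O$ simultaneously, which needs $c(\{B_T \in (m-\eta, m+\eta)\}) > \varepsilon$ for the chosen $\varepsilon$ --- is the delicate part, but it is a finite juggling of positive capacities against one small $\varepsilon$, and should go through.
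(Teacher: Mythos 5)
Your route is genuinely different from the paper's: you go through the quasi-continuity characterization of $L_{G}^{1}(\Omega)$ and try to contradict the existence of a quasi-continuous version $Y$ of $I_{A}(B_{T})$. The paper never touches quasi-continuity. It assumes $I_{A}(B_{T})\in L_{G}^{1}(\Omega)$, picks a boundary point $x_{0}\in A$ approximated by $x_{k}\in A^{c}$ (or the symmetric case, handled by passing to $A^{c}$), forms tent functions $h_{n}$ peaked at $x_{0}$ with support $[x_{0}-\frac{1}{n},x_{0}+\frac{1}{n}]$, and notes that $h_{n}(B_{T})\vee I_{A}(B_{T})-I_{A}(B_{T})\downarrow 0$ would force $\mathbb{\hat{E}}\left[ h_{n}(B_{T})\vee I_{A}(B_{T})-I_{A}(B_{T})\right]\downarrow 0$ by Theorem \ref{new-31}, while this quantity is bounded below by $\lim_{k}h_{n}(x_{k})\,c(\{B_{T}=x_{k}\})=c(\{B_{T}=x_{0}\})>0$ by Theorem \ref{th1}. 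That is a short, purely expectation-level contradiction needing only the capacity of singletons and monotone convergence.

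Your argument has a genuine gap at its concluding step. Exhibiting, for every $\delta>0$, a pair $\omega_{1}\in\{Y=1\}\cap O^{c}$ and $\omega_{2}\in\{Y=0\}\cap O^{c}$ with $\omega_{1}$ and $\omega_{2}$ uniformly $\delta$-close does \emph{not} contradict continuity of $Y|_{O^{c}}$: continuity of a restriction is a pointwise-local statement, and a continuous function on the (after deleting $O$ and the exceptional polar set, typically badly disconnected) remainder of $\Omega$ can perfectly well take the values $0$ and $1$ at arbitrarily close pairs of points. What you actually need is a single $\omega^{*}\in O^{c}\setminus N$ (with $N$ the polar set off which $Y=I_{A}(B_{T})$) with $Y(\omega^{*})=1$, together with a sequence $\omega_{k}\rightarrow\omega^{*}$, $\omega_{k}\in O^{c}\setminus N$, $Y(\omega_{k})=0$. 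Producing that configuration against an arbitrary open $O$ of small capacity requires a lower bound, uniform in $k$, on the capacity of the set of paths that land at $x_{k}$ at time $T$ \emph{and} stay within distance $\delta$ of a prescribed path landing at $x_{0}$ --- a localized estimate much finer than the marginal capacities $c(\{B_{T}=x_{k}\})$ you invoke, and one your sketch does not supply. Your first, homotopy-based variant fails for the further reason that $\{\omega^{\lambda}\}\cap O^{c}$ need not be connected, so no intermediate-value argument is available, and the curve $\{\omega^{\lambda}:\lambda\in[0,1]\}$ may be entirely contained in $O\cup N$. I would abandon the quasi-continuity route and argue directly with the downward monotone convergence property, as the paper does; your correct observation that $c(\{B_{T}=x\})=\Phi\left(\frac{|x|}{\overline{\sigma}\sqrt{T}}\right)>0$ for every $x$ is exactly the input that proof needs.
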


\begin{proof}
Due to $A\not =\emptyset$ and $A\not =\mathbb{R}$, then one of the following
two results must hold.

\begin{description}
\item[(i)] There exist a point $x_{0}\in A$ and a sequence $\{x_{k}%
:k\geq1\}\subset A^{c}$ such that $x_{k}\rightarrow x_{0}$.

\item[(ii)] There exist a point $x_{0}\in A^{c}$ and a sequence $\{x_{k}%
:k\geq1\}\subset A$ such that $x_{k}\rightarrow x_{0}$.
\end{description}

If (i) holds and $I_{A}(B_{T})\in L_{G}^{1}(\Omega)$, then%
\[
h_{n}(B_{T})\vee I_{A}(B_{T})\in L_{G}^{1}(\Omega)\text{ and }h_{n}(B_{T})\vee
I_{A}(B_{T})-I_{A}(B_{T})\downarrow0,
\]
where
\[
h_{n}(x)=\left[  1+n(x-x_{0})\right]  I_{[x_{0}-\frac{1}{n},x_{0}]}(x)+\left[
1-n(x-x_{0})\right]  I_{(x_{0},x_{0}+\frac{1}{n}]}(x).
\]
By (\ref{newnew4}), we have%
\begin{equation}
\mathbb{\hat{E}}\left[  h_{n}(B_{T})\vee I_{A}(B_{T})-I_{A}(B_{T})\right]
\downarrow0\text{ as }n\rightarrow\infty. \label{newnew6}%
\end{equation}
Moreover, we also know that $h_{n}(B_{T})\vee I_{A}(B_{T})-I_{A}(B_{T})\geq
h_{n}(x_{k})I_{\{x_{k}\}}(B_{T})$ for any $k\geq1$. Then we deduce by Theorem
\ref{th1} that%
\[
\mathbb{\hat{E}}\left[  h_{n}(B_{T})\vee I_{A}(B_{T})-I_{A}(B_{T})\right]
\geq\lim_{k\rightarrow\infty}h_{n}(x_{k})c(\{B_{T}=x_{k}\})=c(\{B_{T}%
=x_{0}\})>0,
\]
which contradicts to (\ref{newnew6}). Thus $I_{A}(B_{T})\notin L_{G}%
^{1}(\Omega)$.

If (ii) holds, then $A^{c}$ satisfies (i). Thus we obtain $I_{A^{c}}%
(B_{T})\notin L_{G}^{1}(\Omega)$, which implies $I_{A}(B_{T})=1-I_{A^{c}%
}(B_{T})\notin L_{G}^{1}(\Omega)$.
\end{proof}

\section*{Acknowledgements}

The authors are supported by National Key R\&D Program of China (No. 2018YFA0703900) and National Natural Science Foundation of
China (No.  11671231 and 11601281).

\bigskip


\begin{thebibliography}{99}                                                                                               %


\bibitem {CIP}M.G. Crandall, H. Ishii, P.L. Lions, User's guide to viscosity
solutions of second order partial differential equations,\emph{ }Bull. Amer.
Math. Soc., 27 (1992), 1-67.

\bibitem {DHP11}L. Denis, M. Hu, S. Peng, Function spaces and capacity related
to a sublinear expectation: application to $G$-Brownian motion paths,
Potential Anal., 34 (2011), 139-161.

\bibitem {HJ1}M. Hu, S. Ji, Dynamic programming principle for stochastic
recursive optimal control problem driven by a G-Brownian motion, Stochastic
Process. Appl., 127 (2017), 107-134.

\bibitem {HP09}M. Hu, S. Peng, On representation theorem of G-expectations and
paths of $G$-Brownian motion, Acta Math. Appl. Sin. Engl. Ser., 25 (2009), 539-546.

\bibitem {HS}M. Hu, Y. Sun, Explicit positive solutions to $G$-heat equations
and the application to $G$-capacities, J. Differential Equations, 297 (2021), 246-276.

\bibitem {HWZ}M. Hu, F. Wang, G. Zheng, Quasi-continuous random variables and
processes under the $G$-expectation framework, Stochastic Process. Appl., 126
(2016), 2367-2387.

\bibitem {HL}S. Huang, G. Liang, A monotone scheme for $G$-equations with
application to the explicit convergence rate of robust central limit theorem,
arXiv:1904.07184v3, 2019.

\bibitem {KS}I. Karatzas, S.E. Shreve, Brownian Motion and Stochastic
Calculus, Springer (2005).

\bibitem {K2}N.V. Krylov, On Shige Peng's central limit theorem, Stochastic
Process. Appl., 130 (2020), 1426-1434.

\bibitem {PWX}Z. Pei, X. Wang, Y. Xu, X. Yue, A worst-case risk measure by
$G$-VaR, Acta Math. Appl. Sin. Engl. Ser., 37 (2021), 421-440.

\bibitem {P07a}S. Peng, $G$-expectation, $G$-Brownian Motion and Related
Stochastic Calculus of It\^{o} type, Stochastic analysis and applications,
Abel Symp., Vol. 2, Springer, Berlin, 2007, 541-567.

\bibitem {P08a}S. Peng, Multi-dimensional $G$-Brownian motion and related
stochastic calculus under $G$-expectation, Stochastic Process. Appl.
118(12)(2008) 2223-2253.

\bibitem {P08b}S. Peng, A new central limit theorem under sublinear
expectation, 2008. arXiv:0803.2656v1 [math.PR].

\bibitem {P2019}S. Peng, Nonlinear Expectations and Stochastic Calculus under
Uncertainty, Springer (2019).

\bibitem {PYY}S. Peng, S. Yang, J. Yao, Improving value-at-risk prediction
under model uncertainty, J. Financ. Econom., 2021, 1-32.
\end{thebibliography}
\end{document}